\numberwithin{equation}{section}
\newtheorem{theorem}{Theorem}[section]
\newtheorem{lemma}[theorem]{Lemma}
\newtheorem{corollary}[theorem]{Corollary}
\newtheorem{problem}[theorem]{Problem}
\theoremstyle{definition}
\newtheorem{definition}[theorem]{Definition}
\theoremstyle{remark}
\newcommand{\itemprefix}{}
\newcommand{\myitem}{%
\item\protected@edef\@currentlabel{\itemprefix\theenumi}%
}
\newcommand{\mc}[1]{\mathcal{#1}}
\newcommand{\mf}[1]{\mathfrak{#1}}
\author[I. Juh\'asz]{Istv\'an Juh\'asz}
\address      {HUN-REN Alfr\'ed Rényi Institute of Mathematics%
%, Eötvös Loránd Research Network
}
\email{juhasz@renyi.hu}
\author[J. van Mill]{Jan van Mill}
\address{University of Amsterdam}
\email{j.vanMill@uva.nl}
\author[L. Soukup]{Lajos Soukup}
\address
      {HUN-REN Alfr{\'e}d R{\'e}nyi Institute of Mathematics}
\email{soukup@renyi.hu}
\author[Z. Szentmikl\'ossy]{Zolt\'an Szentmikl\'ossy}
\address{Eötvös University of Budapest}
\email{szentmiklossyz@gmail.com}
\subjclass[2020]{54A25, 54A35, 54D30, 54E52, 54F65, 54G12}
\keywords{$\Delta$-space, compact space, countably compact space, Baire space}
\title{Some new results on $\Delta$-spaces}
\begin{document}

\begin{abstract}
A topological space $X$ is a $\Delta$-space (or $X \in \Delta$) if for any decreasing sequence
$\{A_n : n < \omega\}$ of subsets of $X$ with empty intersection there is a (decreasing)
sequence $\{U_n : n < \omega\}$ of open sets with empty intersection such that $A_n \subset U_n$
for all $n < \omega$. In this note we prove the following results concerning $\Delta$-spaces.

\begin{itemize}
  \item Every $T_3$ countably compact $\Delta$-space is compact.
  \item If there is a $T_1$ crowded Baire  $\Delta$-space then there is an inner model with a measurable cardinal.
  \item If $X \in \Delta$ and $cf \big(o(X) \big) > \omega$ then $|X| < o(X)$. (Here $o(X)$
is the number of open subsets of $X$.)
\end{itemize}

The first two of these provide full and/or partial solutions to problems raised in the literature,
while the third improves a known result.
\end{abstract}

\maketitle

\section{Introduction}

There has recently been quite an interest in the study of $\Delta$-spaces, i.e. spaces $X$
such that for any decreasing sequence
$\{A_n : n < \omega\}$ of subsets of $X$ with empty intersection there is a (decreasing)
sequence $\{U_n : n < \omega\}$ of open sets with empty intersection such that $A_n \subset U_n$
for all $n < \omega$.
Also, the class of these spaces is denoted by $\Delta$.

This interest is perhaps mainly motivated by the fact that for a Tykhonov  space $X$
the locally convex space $C_p(X)$  is distinguished if and only if $X$ is a $\Delta$-space,
see e.g. \cite{KL0}.

On the other hand the simple and attractive definition of this class,
moreover, the large number of number of other interesting features of
the class of $\Delta$-spaces justifies their study.
The aim of our present paper is to add a few new results to this study.

We emphasize that in this work we do not assume that the $\Delta$-spaces in question,
unless otherwise stated, satisfy any separation axiom.

The notation and terminology we use in this paper are pretty standard, so we do not think
it is necessary to waste any space to them.

\section{Countably compact $\Delta$-spaces}

As is mentioned in \cite{KL}, the Proper Forcing Axiom (PFA), or rather  a very deep result in \cite{BDFN}
derived from PFA, implies that every $T_3$ and
countably compact $\,\Delta$-space is actually compact.
The aim of this section is to
show that, in fact, this is already provable in ZFC.

We start with a preliminary definition.

\begin{definition}
(i) If $X$ is any {\em non-compact} topological space, we denote by $\mc C(X)$
the family of all non-compact closed (i.e. closed unbounded) subsets of $X$.\\
(ii) We say that $S \subset X$ is {\em stationary in $X$} iff $S \cap C \ne \emptyset$
for all $C \in \mc C(X)$. We shall denote by $St(X)$ the family of all stationary
subsets of $X$.
\end{definition}

Now, the following simple lemma will play a key role in our proof.

\begin{lemma}\label{lm:sigmacpt}
Assume that $X$ is a non-compact $\Delta$-space and there is a decreasing
sequence $\{S_n : n < \omega\} \subset St(X)$ with empty intersection.
Then $X$ is $\sigma$-compact.
\end{lemma}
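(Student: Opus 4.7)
The plan is to use the $\Delta$-property on the given decreasing sequence $\{S_n : n < \omega\}$ directly and then read off a $\sigma$-compact decomposition from the complements of the resulting open cover.

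First, since $\bigcap_n S_n = \emptyset$ and $X \in \Delta$, I obtain a decreasing sequence of open sets $\{U_n : n < \omega\}$ with $S_n \subset U_n$ and $\bigcap_n U_n = \emptyset$. Setting $F_n = X \setminus U_n$ yields an \emph{increasing} sequence of closed subsets whose union is $X \setminus \bigcap_n U_n = X$. So it suffices to show that every $F_n$ is compact.

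The key observation, which is where stationarity gets used, is that $F_n \cap S_n = \emptyset$ because $S_n \subset U_n$. If $F_n$ were not compact then $F_n$ would belong to $\mc C(X)$ (it is closed in $X$), and since $S_n \in St(X)$ we would have $S_n \cap F_n \ne \emptyset$, a contradiction. Hence each $F_n$ is compact, and $X = \bigcup_{n<\omega} F_n$ witnesses $\sigma$-compactness.

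There is no real obstacle here; the lemma is essentially a one-line application of the $\Delta$-property combined with the definition of $St(X)$. The only thing to be careful about is the hypothesis that $X$ is non-compact, which is what makes $\mc C(X)$ (and hence $St(X)$) a well-defined and nonempty notion, and the observation that $F_n \subset X$ is automatically closed \emph{in} $X$, so the dichotomy ``compact or in $\mc C(X)$'' applies to it without further assumption.
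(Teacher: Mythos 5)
Your proof is correct and follows essentially the same route as the paper: apply the $\Delta$-property to $\{S_n\}$, note that each complement $X \setminus U_n$ is closed and disjoint from the stationary set $S_n$, hence compact, and conclude $X = \bigcup_n (X \setminus U_n)$ is $\sigma$-compact. No gaps.
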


\begin{proof}
Indeed, as $X \in \Delta$, there are open sets $\{U_n : n < \omega\}$ such that
$S_n \subset U_n$ for each $n < \omega$ and $\bigcap \{U_n : n < \omega\} = \emptyset$.
But then, as $S_n$ stationary in $X$, the closed set $K_n = X \setminus U_n$ must be compact
for every $n < \omega$, hence $X = \bigcup \{K_n : n < \omega\}$ is indeed $\sigma$-compact.
\end{proof}

Note that if $\{T_n : n < \omega\} \subset St(X)$ are pairwise disjoint then putting
$S_n = \bigcup \{T_m :  n \le m < \omega\}$ the
sequence $\{S_n : n < \omega\} \subset St(X)$ is decreasing with empty intersection,
hence we may apply Lemma \ref{lm:sigmacpt} to conclude that $X$ is $\sigma$-compact.

We are now ready to formulate and prove the main result of this section.

\begin{theorem}\label{tm:CC=C}
Every $T_3$ and countably compact $\,\Delta$-space is compact.
\end{theorem}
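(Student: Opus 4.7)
The plan is to suppose, toward a contradiction, that $X$ is a non-compact $T_3$ countably compact $\Delta$-space and to produce a pairwise disjoint family $\{T_n : n<\omega\}$ of stationary subsets of $X$, or of some closed non-compact subspace of $X$ --- recall that closed subspaces inherit both countable compactness and the $\Delta$-property. By the remark following Lemma \ref{lm:sigmacpt}, such a family yields a decreasing sequence of stationary sets with empty intersection and therefore forces the ambient space to be $\sigma$-compact. But a $\sigma$-compact space is Lindel\"of (finite subcovers of each compact piece glue to a countable subcover), and a Lindel\"of countably compact space is compact; this would contradict the choice of $X$ (or of the closed non-compact subspace, which is by hypothesis non-compact).

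I would split into cases according to whether $X$ contains a closed non-compact \emph{minimal} element of $\mc C(X)$, i.e.\ a $Y\in\mc C(X)$ all of whose proper closed subsets are compact. \textbf{Case A.} If such a $Y$ exists, then $\mc C(Y)=\{Y\}$, so every nonempty subset of $Y$ meets every member of $\mc C(Y)$ and is therefore stationary in $Y$. Any partition $Y=\bigsqcup_{n<\omega}T_n$ into $\omega$ nonempty pieces is then the desired pairwise disjoint stationary family in $Y$, and applying Lemma \ref{lm:sigmacpt} inside $Y$ yields the contradiction.

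\textbf{Case B.} If no such minimal $Y$ exists, then every non-compact closed subset of $X$ properly contains a non-compact closed subset. Starting with $C_0=X$, choosing $C_{\alpha+1}\subsetneq C_\alpha$ non-compact closed at successor stages, and setting $C_\lambda=\bigcap_{\alpha<\lambda}C_\alpha$ at limits, I would construct a strictly decreasing continuous chain $\{C_\alpha:\alpha<\gamma\}$ of non-compact closed sets, taking $\gamma$ least with $C_\gamma:=\bigcap_{\alpha<\gamma}C_\alpha$ compact; such $\gamma$ must be a limit ordinal. If $\mathrm{cf}(\gamma)=\omega$, pick $\alpha_n\uparrow\gamma$ and apply the $\Delta$-property to the decreasing sequence $A_n=C_{\alpha_n}\setminus C_\gamma$, which has empty intersection; the resulting open shrinkings $U_n\supseteq A_n$ with $\bigcap U_n=\emptyset$ produce closed complements whose intersections with $C_{\alpha_n}$ sit inside the compact $C_\gamma$, and I would show that these complements exhaust $X$ by compact sets. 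If $\mathrm{cf}(\gamma)\ge\omega_1$, pass to a cofinal $\omega_1$-subchain, split $\omega_1$ into $\omega$ pairwise disjoint stationary sets $\{I_n\}$ by Solovay's theorem, and set $T_n=\bigcup_{\alpha\in I_n}(C_\alpha\setminus C_{\alpha+1})$.

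The principal obstacle is the verification, in the $\mathrm{cf}(\gamma)\ge\omega_1$ subcase, that each $T_n$ is stationary in $X$ --- equivalently, that for every $D\in\mc C(X)$ the rank image $R(D):=\{\alpha<\omega_1:D\cap(C_\alpha\setminus C_{\alpha+1})\ne\emptyset\}$ is stationary in $\omega_1$. The expected pressing-down-style argument is: if $R(D)$ avoided a club $E\subseteq\omega_1$, then for every $\alpha\in E$ we would have $D\cap C_\alpha=D\cap C_{\alpha+1}$, and by continuity of the chain at limits of $E$ the decreasing sequence $\{D\cap C_\alpha\}$ would collapse into the compact set $C_\gamma$ for sufficiently large $\alpha$, contradicting that $D$ is non-compact. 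Making this pressing-down argument rigorous, and in the $\mathrm{cf}(\gamma)=\omega$ subcase controlling the portion of each $X\setminus U_n$ lying outside $C_{\alpha_n}$ (so that the shrinkings actually constitute a \emph{compact} cover of $X$), are the main technical steps of the proof.
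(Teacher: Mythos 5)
Your Case A is correct: if some $Y \in \mc C(X)$ is a minimal non-compact closed set, then every nonempty subset of $Y$ is stationary in $Y$, and since $Y$ is itself a non-compact, countably compact, $T_3$ $\Delta$-space, the remark after Lemma \ref{lm:sigmacpt} makes $Y$ $\sigma$-compact, hence compact, a contradiction. But Case B, which is the substantive case, does not go through, and the obstacle you flag at the end is not merely technical --- it is fatal to the chain strategy. The root problem is that $\mc C(X)$ need not be a filter base: two non-compact closed sets can be disjoint (and the absence of a minimal element does nothing to prevent this). Consequently a single descending chain $\{C_\alpha\}$ cannot ``see'' all of $\mc C(X)$: a non-compact closed $D$ may already be disjoint from $C_1$, in which case your rank image $R(D)$ equals $\{0\}$ and $T_n$ fails to be stationary for every $n$ with $0 \notin I_n$. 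There is no club filter and hence no pressing-down lemma in this setting; your proposed argument ``$R(D)$ avoids a club $E$ implies $D \cap C_\alpha$ collapses into $C_\gamma$'' also breaks down because $D \cap C_\alpha$ may still drop strictly at the ordinals outside $E$. Likewise, in the $\mathrm{cf}(\gamma)=\omega$ subcase the sets $A_n = C_{\alpha_n}\setminus C_\gamma$ are not stationary, and the complements $X \setminus U_n$ are controlled only inside $C_{\alpha_n}$: a non-compact closed set disjoint from $C_{\alpha_0}$ can lie entirely inside some $X \setminus U_n$ (picture two disjoint clopen copies of $\omega_1$ with the chain descending inside one of them), so there is no way to force the $X\setminus U_n$ to be compact.

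The paper's proof is organized precisely around this difficulty. It first reduces, via minimal scattered height, local compactness, and the countable tightness of compact $\Delta$-spaces, to a putative counterexample with $|X| \le \mf c$, and then splits on whether $\mc C(X)$ is centered. If it is, $\mc C(X)$ generates a free ultrafilter on a set of size at most $\mf c$, which lies below the first measurable cardinal, so the ultrafilter is not $\sigma$-complete and a decreasing sequence of stationary sets with empty intersection falls out. If it is not, a Cantor tree of pairwise disjoint members of $\mc C(X)$ forces $|X| = \mf c$, and Kuratowski's disjoint refinement lemma applied to the at most $\mf c$ many separable members of $\mc C(X)$ yields the required disjoint stationary sets. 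Your proposal contains neither the cardinality reduction nor any device for handling the non-centered case, and these are the actual content of the theorem.
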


\begin{proof}
Our proof of this is indirect: Assume, on the contrary, that
$$\mc E = \{X \in \Delta \cap T_3 : X \text{ is countably compact and non-compact} \} \ne \emptyset.$$
We shall show that this assumption leads to a contradiction.
Our first step towards this contradiction is the following claim.

Claim 1. $\mc E \ne \emptyset$ implies that there is $X \in \mc E$ which is {\em locally compact}.

To see this, we first note that every $T_3$ and countably compact $\Delta$-space is scattered.
In fact,  crowded countably compact $T_3$-spaces are $\omega$-resolvable by Theorem 6.9 of \cite{CG},
moreover Proposition 6.6 of \cite{LP} says that no $\Delta$-space can be both Baire and $\omega$-resolvable.
We also refer about this to the first paragraph of the next section.

We shall actually show that if $X \in \mc E$ is of minimum possible scattered height $ht(X) = \alpha$ then
$X$ is locally compact.
To see this, we first show that $\alpha$ has to be a limit ordinal. Indeed, if we had $\alpha = \beta + 1$
then the non-empty top level $X_\beta$ of $X$ has to be finite because $X$ is countably compact.
So, if $\mc U$ is any open cover of $X$ then there is a finite $\mc V \subset \mc U$
that covers $X_\beta$. But then $Y = X \setminus \cup \mc V$  is also
a $T_3$ and countably compact $\Delta$-space with $ht(Y) \le \beta < \alpha$,
hence $Y$ is compact by the minimality of $\alpha$. So, $Y$ is also covered by
finitely many members of $\mc U$, showing that $X$ is also compact, a contradiction.

So, we know that $ht(X) = \alpha$ is a limit ordinal. But then, using that $X$ is $T_3$,
every point $x \in X$ has a closed, hence countably compact, neighbourhood $W_x$ such that
$ht(W_x) = ht(x,X) < \alpha$. But the minimality of $\alpha$ then implies that
$W_x$ is actually compact, i.e. $X$ is indeed locally compact.

Now consider the Alexandrov one-point compactification $A(X) = X \cup \{p\}$ of $X$.
Since any one-point extension of a $\Delta$-space is trivially also a $\Delta$-space, we have $A(X) \in \Delta$.
So, $A(X)$ is a compact $\Delta$-space, hence by Theorem 4.2 of \cite{KL} it has countable tightness.
Consequently, there is a countable subset $H$ of $X$ such that $p \in \overline{H}$,
the closure taken in $A(X)$. Hence $X \cap \overline{H}$ is a locally compact
and separable member of $\mc E$.
This argument clearly also yields the following.

Claim 2. Every locally compact $X \in \mc E$ includes a separable locally compact
member of $\mc E$.

As is well-known and easy to prove, see e.g. \cite{JSSS},
separable scattered $T_3$-spaces have cardinality at most $\mathfrak{c}$, the size
of the continuum. So, it is immediate from the above that from $\mc E \ne \emptyset$
it follows that
$$\mc E_0 = \{X \in \mc E : X \text{ is locally compact and } |X| \le \mf c\} \ne \emptyset$$
as well. Note that for every $X \in \mc E_0$ we obviously have $\mc C(X) \subset \mc E_0$.

Now we distinguish two cases.

Case 1. There is $X \in \mc E_0$ such that for any two $C_0, C_1 \in \mc C(X)$
we have $C_0 \cap C_1 \ne \emptyset$.

Note that in this case we even have $C_0 \cap C_1 \in \mc C(X)$ as well.
Indeed, if $K = C_0 \cap C_1$ would be compact then, as $X$ is locally compact,
we could find an open neighborhood $U$ of $K$ such that its closure in $X$
is compact, and then $C_0 \setminus U$ and $C_1 \setminus U$ would be
two disjoint members of $\mc C(X)$, contradicting that we are in Case 1.

We also claim that $\cap \mc C(X) = \emptyset$. This is obvious because
every point $x \in X$ has an open neighborhood $U_x$ with compact closure,
hence $x \notin X \setminus U_x \in \mc C(X)$.

Thus, $\mc C(X)$  is a collection closed under finite intersections and with
empty intersection, hence any ultrafilter $u$ on $X$ extending $\mc C(X)$  is free.
But $|X| \le \mf c$ is less than the first measurable cardinal, if it exists,
hence the ultrafilter $u$ is not $\sigma$-complete. Thus there is a decreasing sequence
$\{S_n : n < \omega\} \subset u$ with empty intersection.
Now, $\mc C(X) \subset u$ clearly implies that every member of $u$ is
stationary in $X$, hence by Lemma \ref{lm:sigmacpt} we have that $X$
is $\sigma$-compact. But countably compact and $\sigma$-compact spaces
are compact, contradicting that $X$ is not compact.

Thus we are left with the following.

Case 2. For every $X \in \mc E_0$ there are two $C_0, C_1 \in \mc C(X)$
such that $C_0 \cap C_1 = \emptyset$.

Claim 3. In this case we have $|X| = \mf c$ for all $X \in \mc E_0$.

Indeed, let us consider any $X \in \mc E_0$. Clearly,
we only have to show that $|X| \ge \mf c$.
To see this, we use the fact that we are in Case 2. and a standard procedure to define a family of sets
$$\{C_s : s \in 2^{< \omega}\} \subset \mc C(X) \subset \mc E_0$$ such that for every
$s \in 2^{< \omega}$ we have
$$C_{s^{\curvearrowright}0} \cup C_{s^{\curvearrowright}1} \subset C_s \text{ and }
C_{s^{\curvearrowright}0} \cap C_{s^{\curvearrowright}1} = \emptyset.$$
Then for any $\omega$-sequence $t \in 2^\omega$, we have the decreasing $\omega$-sequence
$\{ C_{t \upharpoonright n} : n \in \omega\}$ of closed sets in $X$ whose intersection
$C_t \ne \emptyset$ because $X$ is countably compact. So, the family
$\{C_t : t \in 2^\omega\}$ clearly consists of pairwise disjoint non-empty sets,
hence we indeed get $|X| = \mf c$.

For every $X \in \mc E_0$ let us now denote by $\mc C_\omega(X)$ the family of
separable members of $\mc C(X)$. Clearly, then $|\mc C_\omega(X)| \le \mf c$, moreover
every member of $\mc C_\omega(X)$ has cardinality $\mf c$. We can thus apply Kuratowski's
disjoint refinement lemma to find for every $C \in \mc C_\omega(X)$ a $\mf c$-sized
subset $A_C \subset C$ in such a way that the family $\{A_C : C \in \mc C_\omega(X)\}$
is disjoint.

Note that it follows from Claim 2. that if $S \subset X$ intersects every $A_C$
then $S$ is stationary in $X$. But we clearly can find infinitely many, even $\mf c$-many
disjoint such sets. Using Lemma \ref{lm:sigmacpt}, we may thus conclude that $X$
is $\sigma$-compact and hence compact. So, we ran into a contradiction in this case as well,
completing our proof.
\end{proof}

In \cite{LP} the authors write the following: "Whether any countably compact $\Delta$-space has countable tightness
in ZFC is an open problem." Since compact $\Delta$-space do have countable tightness, Theorem \ref{tm:CC=C}
yields an immediate affirmative answer to this question as well.

\section{Baire $\Delta$-spaces}

It is obvious that if $X \in \Delta$ is Baire then the intersection of any decreasing sequence
$\{S_n : n < \omega\}$ of {\em dense} subsets of $X$  is non-empty.
In particular, this clearly implies that $X$ is not $\omega$-resolvable.

Note that
if $X$ has an isolated point then, trivially, the intersection of any family
of dense subsets of $X$ is non-empty. This lead the authors to raise the
following natural question in \cite{KLT}, Problem 4.1: If $X \in \Delta$ is Baire, must $X$
have an isolated point? In this section we give a partial affirmative answer to
this question by showing that the existence of a counterexample implies that there is an inner
model with a measurable cardinal. By the above observation that any Baire space $X \in \Delta$ is not
$\omega$-resolvable, this will be an immediate consequence of our next result.

\begin{theorem}\label{tm:Baire}
If the crowded Baire $T_1$-space $X$ is not $\omega$-resolvable then $X$ has a crowded Baire
irresolvable subspace.
\end{theorem}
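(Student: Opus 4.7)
My plan is a two-level Hewitt-style decomposition. First, set $Z=\bigcup\{U\subseteq X:U\text{ is open and }\omega\text{-resolvable as a subspace}\}$. A standard maximal pairwise disjoint family argument shows $Z$ itself is $\omega$-resolvable and that every open $\omega$-resolvable subspace of $X$ lies in $\overline{Z}$. If $Z$ were dense in $X$, any $\omega$-resolution $Z=\bigsqcup_{n<\omega}E_n$ would extend to $X$ by absorbing $X\setminus Z$ into $E_0$, contradicting the hypothesis. Hence $V:=X\setminus\overline{Z}$ is non-empty and open; as an open subspace of $X$ it is Baire, crowded and $T_1$, and by construction no non-empty open subspace of $V$ is $\omega$-resolvable.

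Second, inside $V$ I would repeat the peeling with ``resolvable'' replacing ``$\omega$-resolvable'': set $Z'=\bigcup\{W\subseteq V:W\text{ open in }V\text{ and resolvable}\}$, itself resolvable by the same argument. The favorable case is $Z'$ not dense in $V$: then $Y:=V\setminus\overline{Z'}$ is non-empty and open in $X$, so Baire, crowded and $T_1$; no non-empty open subspace of $Y$ is resolvable, hence $Y$ itself is irresolvable, and this is the desired subspace.

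The main obstacle is ruling out the case in which $Z'$ is dense in $V$. In that case, every non-empty open $W\subseteq V$ meets $Z'$ in an open resolvable set $W\cap Z'$; writing $W\cap Z'=A\sqcup B$ with both parts dense in $W\cap Z'$ and absorbing $W\setminus Z'$ into $A$ yields the resolution $W=(A\cup(W\setminus Z'))\sqcup B$ of $W$. Therefore every non-empty open subspace of $V$ is resolvable. To obtain the required contradiction with Step~1 one invokes the key lemma that a Baire space in which every non-empty open subspace is resolvable must itself be $\omega$-resolvable, applied to $V$. This lemma is proved by induction on $n$ using Illanes's characterization ($\omega$-resolvable $\Leftrightarrow$ $n$-resolvable for every finite $n$); the step $n\to n+1$ splits one dense class of an $n$-resolution of $V$ into two dense subsets by gluing, through a maximal disjoint family of open sets, the local resolutions supplied by the open-hereditary resolvability hypothesis, with Baireness of $V$ ensuring that the resulting pieces remain dense in $V$.
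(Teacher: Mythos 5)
Your Step~1 and the favorable half of Step~2 are sound (and standard Hewitt-type peeling), but the whole argument stands or falls on the ``key lemma'' you invoke in the remaining case: \emph{a Baire space in which every non-empty open subspace is resolvable is $\omega$-resolvable}. This lemma is not proved, and the sketch you give for it does not work. In the step from $n$ to $n+1$ you must split one dense class $D_0$ of an $n$-resolution of $V$ into two subsets that are each dense in $V$. The hypothesis, however, only hands you resolutions of \emph{open} subspaces, $W=A_W\sqcup B_W$; these sets bear no useful relation to $D_0$, and gluing them over a maximal disjoint family merely reproduces a $2$-resolution of $V$, not an $(n+1)$-st dense class. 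Indeed, the classes $D_i$ can all be open-hereditarily irresolvable while every non-empty open $W\subseteq V$ is still resolvable (for $n\ge 2$, $W$ is $n$-resolvable via the traces $W\cap D_i$), so there is simply nothing to split. In that situation $V$ is exactly $n$-resolvable for some $n\ge 2$ and the theorem's irresolvable Baire subspace cannot be found among \emph{open} subspaces at all --- which is the only place your construction ever looks (your candidate $Y=V\setminus\overline{Z'}$ is open in $X$).

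This is precisely the configuration the paper's proof confronts head on. Using Illanes's theorem it fixes $n$ with $X$ exactly $n$-resolvable, partitions $X$ into dense irresolvable classes $S_0,\dots,S_{n-1}$, removes the closure of a maximal disjoint family of $(n+1)$-resolvable open sets to get an open $U$ on which each $U\cap S_i$ is open-hereditarily irresolvable, and then --- the step your proposal is missing --- runs a Baire category argument: if for every non-empty open $W\subseteq U$ and every $i$ the set $W\cap S_i$ failed to be Baire, one could shrink $U$ in $n$ steps to a non-empty open set that is a union of $n$ first-category traces, contradicting Baireness of $X$. The resulting witness $W\cap S_i$ is a dense-but-not-open subspace of $W$. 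To repair your argument you would either have to prove your key lemma (which, in view of the above, would in effect have to contain this same category argument, and which is very likely false as stated modulo the consistency of Baire irresolvable spaces) or abandon the restriction to open witnesses and argue as the paper does.
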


\begin{proof}
Illanes proved in \cite{I} that if a space $X$ is not $\omega$-resolvable then there is $0 < n < \omega$
such that $X$ is $n$-resolvable but not $(n + 1)$-resolvable. Thus we may fix a partition
$\{S_i : i < n\}$ of $X$ into $n$ irresolvable dense subsets.

Let $\mc V$ be a maximal disjoint family of $(n + 1)$-resolvable open subsets of $X$.
Then $W = \bigcup \mc V$ cannot be dense in $X$ because it is not $(n + 1)$-resolvable.
Thus we have $U = X \setminus \overline{W} \ne \emptyset$.

We claim that then $U \cap S_i$ is {\em open hereditary irresolvable} (in short OHI) for every $i < n$.
Indeed, if we had a non-empty open $V \subset U$ and $i < n$ such that $V \cap S_i$ is resolvable,
then clearly $V$ would be $(n + 1)$-resolvable, contradicting the maximal disjointness of $\mc V$.
Note also that $V \cap S_i$ is crowded for any open set $V$ because $X$ is $T_1$.
We claim that there are $V \subset U$ non-empty open and $i < n$
for which the crowded and irresolvable subspace $V \cap S_i$ is also Baire.

Indeed, otherwise we could define by induction a decreasing sequence of non-empty
open sets $\{U_i : i < n\}$ as follows. Since $U \cap S_0$ is not Baire,
there is a non-empty
open $U_0 \subset U$ such that $U_0 \cap S_0$ is of first category. If $U_i$
has been defined for some $i < n-1$ so that $U_i \cap S_i$ is of first category then,
by our indirect assumption, we may find  a non-empty
open $U_{i + 1} \subset U_i$ such that $U_{i + 1} \cap S_{i + 1}$ is of first category.
But then at the end we have $$U_{n - 1} = \bigcup \{U_{n - 1} \cap S_i : i < n\},$$
hence the non-empty open  $U_{n - 1} \subset X$, as a finite union of first category sets, would be of
first category in $X$, contradicting that $X$ is Baire.
\end{proof}

Now, as it had been established in \cite{KST} and \cite{KT}, the existence of a crowded Baire
irresolvable space implies that there is an inner model $M$ of the set-theoretic
universe $V$ that contains a measurable cardinal. In other words, we have the
following result that yields a partial solution of problems 4.1 and 4.2 from \cite{KLT}.

\begin{corollary}\label{co:Baire}
If there is no inner model of $V$ containing a measurable cardinal, for instance
if Gödel's axiom of constructibility $V = L$ holds, then every $T_1$  crowded Baire
$\Delta$-space  has an isolated point; consequently, any $T_1$ hereditary Baire
$\Delta$-space is scattered.
\end{corollary}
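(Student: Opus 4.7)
The plan is to derive this corollary directly from Theorem \ref{tm:Baire} together with the two facts already flagged in the preceding discussion: first, that every Baire $\Delta$-space fails to be $\omega$-resolvable (the observation at the start of Section 3), and second, that the existence of a crowded Baire irresolvable space entails an inner model of $V$ containing a measurable cardinal (\cite{KST}, \cite{KT}).

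For the first conclusion I would argue by contradiction. Suppose $X$ is a $T_1$ crowded Baire $\Delta$-space while $V$ has no inner model with a measurable cardinal. Since $X \in \Delta$ is Baire, $X$ is not $\omega$-resolvable, so Theorem \ref{tm:Baire} furnishes a crowded Baire irresolvable subspace $Y \subset X$. But the cited inner-model result then produces an inner model of $V$ with a measurable cardinal, contradicting our hypothesis. Hence no such $X$ can exist, i.e.\ every $T_1$ crowded Baire $\Delta$-space must possess an isolated point.

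For the second conclusion, I would let $X$ be a $T_1$ hereditary Baire $\Delta$-space and pick an arbitrary non-empty closed subspace $Y \subset X$. The one routine verification needed is that $\Delta$ is a hereditary class: any decreasing $\{A_n\} \subset Y$ with empty intersection may be treated as a sequence in $X$, and intersecting the resulting witnessing open sets with $Y$ gives witnesses inside $Y$. Thus $Y$ is itself a $T_1$ Baire $\Delta$-space, so by the first part $Y$ cannot be crowded; hence $Y$ has an isolated point, and since this holds for every non-empty closed subspace, $X$ is scattered.

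The main obstacle here is purely one of bookkeeping: all the substance lies in Theorem \ref{tm:Baire} and in the deep inner-model machinery of \cite{KST}, \cite{KT}, so once those ingredients are invoked with the correct hypotheses ($T_1$, crowded, Baire) the argument is just a short chain of implications.
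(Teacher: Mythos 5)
Your proposal is correct and follows essentially the same route as the paper, which presents the corollary as an immediate consequence of Theorem \ref{tm:Baire}, the observation that Baire $\Delta$-spaces are not $\omega$-resolvable, and the inner-model results of \cite{KST} and \cite{KT}. The details you supply for the second clause (hereditariness of the class $\Delta$ and the characterization of scatteredness via isolated points of closed subspaces) are exactly the routine verifications the paper leaves implicit.
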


As is well-known, pseudocompact spaces are Baire, hence we may also ask: is there
a crowded pseudocompact $\Delta$-space? Of course, in view of Theorem \ref{tm:Baire},
the existence of such a space implies the consistency of having a measurable cardinal.
On the other hand, as far as we know, it is open if ZFC implies that all crowded pseudocompact
spaces are $\omega$-resolvable. we actually conjecture that this is true, and hence
there are no crowded and Baire pseudocompact $\Delta$-spaces.

In any case, it follows from the results in \cite{JSSS1} that
the existence of a non-$\omega$-resolvable crowded pseudocompact space would imply
much stronger large cardinals than measurable.

\section{Connection with $o(X)$}

We start with a general, somewhat technical result that gives a condition for $X \notin \Delta$.

\begin{theorem}\label{tm:cf}
Let $X$ be a topological space with $|X| = \kappa$, where $cf(\kappa) > \omega$.
Assume also that there is a family $\mc A \subset [X]^\kappa$ such that $|\mc A| \le \kappa$
and for for every closed set $F \in [X]^\kappa$ there is $A \in \mc A$ with $A \subset F$.
Then $X \notin \Delta$.
\end{theorem}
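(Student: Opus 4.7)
The plan is to construct a decreasing sequence $\{B_n : n < \omega\}$ of subsets of $X$ with empty intersection that witnesses $X \notin \Delta$. Assuming $X \in \Delta$, we could then find open sets $U_n \supset B_n$ with $\bigcap_{n<\omega} U_n = \emptyset$, equivalently $X = \bigcup_{n<\omega}(X \setminus U_n)$. The strategy is to make the $B_n$ so ``fat'' that every open neighbourhood of $B_n$ must have complement of size $< \kappa$; since $cf(\kappa) > \omega$, a countable union of such complements has size $<\kappa$, contradicting $|X|=\kappa$.

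First I would translate the fatness condition into combinatorics via the hypothesis on $\mc A$. An open $U$ containing $B_n$ has $|X\setminus U| < \kappa$ iff no closed set of size $\kappa$ is disjoint from $B_n$. By the hypothesis, every closed $F \in [X]^\kappa$ contains some $A \in \mc A$, so it is enough to arrange that $B_n \cap A \neq \emptyset$ for every $A \in \mc A$.

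Next I would build the $B_n$ by a standard transfinite disjoint-refinement construction. Enumerate (with repetitions if necessary) $\mc A = \{A_\alpha : \alpha < \kappa\}$ and enumerate $\omega \times \kappa = \{(n_\beta,\alpha_\beta): \beta<\kappa\}$. Recursively pick distinct points $x_\beta \in A_{\alpha_\beta} \setminus \{x_\gamma : \gamma < \beta\}$; this is possible because $|A_{\alpha_\beta}| = \kappa > |\beta|$. Setting $x_{n_\beta,\alpha_\beta} := x_\beta$ gives a family $\{x_{n,\alpha}: n<\omega,\ \alpha<\kappa\}$ of pairwise distinct points with $x_{n,\alpha} \in A_\alpha$. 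Define
\[
B_n = \{x_{m,\alpha} : n \le m < \omega,\ \alpha < \kappa\}.
\]
Then $\{B_n\}$ is visibly decreasing, each $x_{m,\alpha}$ drops out at stage $m+1$ so $\bigcap_n B_n = \emptyset$, and $x_{n,\alpha} \in B_n \cap A_\alpha$ ensures $B_n$ meets every $A \in \mc A$.

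Finally I would run the contradiction: supposing $X \in \Delta$, pick open $U_n \supset B_n$ with $\bigcap_n U_n = \emptyset$. By the reduction above, each $F_n := X\setminus U_n$ is closed and disjoint from $B_n$, hence cannot contain any $A \in \mc A$, hence $|F_n| < \kappa$. Then $|X| = \bigl|\bigcup_n F_n\bigr| \le \sum_{n<\omega} |F_n| < \kappa$ since $cf(\kappa) > \omega$, contradicting $|X| = \kappa$. There is no serious obstacle here; the only point requiring care is the bookkeeping in the recursive choice of the $x_{n,\alpha}$, and the essential use of $cf(\kappa) > \omega$ appears exactly at the end to collapse the countable union of small sets.
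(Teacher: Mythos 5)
Your proof is correct and follows essentially the same route as the paper's: the authors invoke Kuratowski's disjoint refinement lemma to get disjoint $\kappa$-sized sets $B_A\subset A$ and then disjoint transversals $S_n$ meeting every $B_A$, whereas you carry out the equivalent transfinite recursion explicitly to produce the points $x_{n,\alpha}$. The reduction of ``$|X\setminus U|<\kappa$'' to ``$B_n$ meets every $A\in\mc A$'' and the final use of $cf(\kappa)>\omega$ to collapse the countable union are exactly the paper's argument.
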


\begin{proof}
By our assumptions we can apply Kuratowski's
disjoint refinement lemma to $\mc A$ to obtain a disjoint collection
$$\{B_A : A \in \mc A\} \subset [X]^\kappa$$ such that $B_A \subset A$ for all $A \in \mc A$.
Now, it is easy to fix $\omega$-many disjoint sets $\{S_n : n < \omega\}$ such that
$S_n \cap B_A \ne \emptyset$ for any $n < \omega$ and $A \in \mc A$.

It is clear from our assumptions that then we also have $S_n \cap F \ne \emptyset$ for every $n < \omega$ and
for every closed set $F \in [X]^\kappa$.
In other words, then for any open set $U \supset S_n$ we have $|X \setminus U| < \kappa$.
Consequently, if we take open sets $U_n \supset S_n$ for al $n < \omega$,
then $cf(\kappa) > \omega$ implies $$|X \setminus \bigcap \{U_n : n < \omega\}| < \kappa$$
as well. But this clearly implies $X \notin \Delta$.
\end{proof}

Theorem \ref{tm:cf} trivially applies if
we have
$$|\{F \in [X]^\kappa : F \text{ is closed}\}| \le \kappa.$$
In particular, we trivially have this if $o(X) \le \kappa$, where $o(X)$ denotes the number of all
open, or equivalently closed, subsets of $X$.
Thus we have the following
corollary of Theorem \ref{tm:cf}.

\begin{corollary}\label{co:cfo}
If $X \in \Delta$ and $cf \big(o(X) \big) > \omega$ then $|X| < o(X)$.
\end{corollary}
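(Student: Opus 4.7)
The plan is to reduce Corollary \ref{co:cfo} to Theorem \ref{tm:cf} by contradiction. Set $\kappa = o(X)$; since $cf(\kappa) > \omega$, $\kappa$ is an uncountable cardinal. Assume, toward a contradiction, that $|X| \ge \kappa$, and try to produce a subspace of $X$ to which Theorem \ref{tm:cf} applies in the ``trivial'' form pointed out in the remark immediately after the theorem.

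First I would pick any $Y \subset X$ with $|Y| = \kappa$. The class $\Delta$ is straightforwardly hereditary: if $\{A_n : n < \omega\} \subset Y$ is decreasing with empty intersection, the same sequence has empty intersection in $X$, so it admits open separators $\{V_n : n < \omega\}$ in $X$ with empty intersection, and then $\{V_n \cap Y : n < \omega\}$ are open separators in $Y$ with empty intersection. Hence $Y \in \Delta$. Moreover, since every open subset of $Y$ has the form $U \cap Y$ for some open $U \subset X$, we have $o(Y) \le o(X) = \kappa$.

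Next I would invoke Theorem \ref{tm:cf} on $Y$ via the trivial family $\mc A = \{F \in [Y]^\kappa : F \text{ is closed in } Y\}$: this $\mc A$ satisfies $|\mc A| \le o(Y) \le \kappa$, and for every closed $F \in [Y]^\kappa$ one has $F \in \mc A$, so $F$ is covered by a member of $\mc A$ (itself). Because $|Y| = \kappa$ and $cf(\kappa) > \omega$, Theorem \ref{tm:cf} now yields $Y \notin \Delta$, contradicting $Y \in \Delta$. Therefore $|X| < o(X)$.

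I do not expect any substantive obstacle here; the only two routine verifications are the hereditary nature of $\Delta$ under arbitrary subspaces and the inequality $o(Y) \le o(X)$, both of which follow immediately from the definitions. The entire content of the argument is the passage from $|X| \ge o(X)$ to a subspace of cardinality exactly $o(X)$, after which the ``in particular'' clause stated in the paper applies verbatim.
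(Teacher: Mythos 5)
Your proof is correct and follows essentially the same route as the paper, which simply notes that Theorem~\ref{tm:cf} applies trivially once the closed sets of size $\kappa$ number at most $\kappa$. You in fact supply a detail the paper leaves implicit --- passing to a $\kappa$-sized subspace $Y$ and checking that $\Delta$ is hereditary and $o(Y)\le o(X)$ --- which is exactly what is needed to convert the hypothesis $cf\big(o(X)\big)>\omega$ into the theorem's hypothesis $cf(|Y|)>\omega$.
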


Since $\kappa^\omega = \kappa$ implies $cf(\kappa) > \omega$,
Corollary \ref{co:cfo} improves Theorem 6.1 of \cite{LP} which says that $X \in \Delta$
implies $|X| < o(X)^\omega$.

Clearly, if $X$ is hereditary Lindelöf of weight $\le \mf c$ or if
$|X| = \mf c$ and $X$ is hereditary separable then $o(X) \le \mf c$.
So, if either $X$ is hereditary Lindelöf of weight $\le \mf c$ or hereditary separable
then $X \in \Delta$ implies $|X| < \mf c$.

Thus we are lead to the following problem.

\begin{problem}
Does every ($T_2$ or $T_3$) hereditary Lindelöf $\Delta$-space have cardinality $< \mf c$?
\end{problem}

Provided that the answer to this is affirmative, we may ask the following.

\begin{problem}
Does every ($T_2$ or $T_3$) $\Delta$-space of countable spread have cardinality $< \mf c$?
\end{problem}

\bigskip

\end{document}